\date{\today}
\newtheorem{theorem}{Теорема}
\newtheorem{proposition}{Твердження}
\newtheorem{corollary}{Наслiдок}
\newtheorem{lemma}{Лема}
\theoremstyle{definition}
\newtheorem{example}{Приклад}
\begin{document}

\title[Про напiвгрупу $\textbf{ID}_{\infty}$]{Про напiвгрупу $\textbf{ID}_{\infty}$}

\author[O.~Гутік, A.~Савчук]{Олег Гутік, Анатолій Савчук}
\address{Механіко-математичний факультет, Львівський національний університет ім. Івана Франка, Університецька 1, Львів, 79000, Україна}
\email{o\_gutik@franko.lviv.ua, ovgutik@yahoo.com, asavchuk1@meta.ua}

\keywords{Semigroup of isometries, partial bijection, topological semigroup, compact, countably compact, feebly compact, discrete space, embedding. }

\subjclass[2010]{20M18, 20M20, 20M30, 22A15, 22A25, 54D30, 54D40, 54E52, 54H10}

\begin{abstract}
Досліджуємо напівгрупу $\textbf{\textsf{ID}}_{\infty}$ всіх часткових коскінченних ізометрій множини цілих чисел $\mathbb{Z}$. Доведено, що фактор-напівгрупа $\textbf{\textsf{ID}}_{\infty}/\mathfrak{C}_{\textsf{mg}}$ за мінімальною груповою конгруенцією $\mathfrak{C}_{\textsf{mg}}$ ізоморфна групі ${\textsf{Iso}}(\mathbb{Z})$ усіх ізометрій множини $\mathbb{Z}$, $\textbf{\textsf{ID}}_{\infty}$ є $F$-інверсною напівгрупою, а також, що напівгрупа $\textbf{\textsf{ID}}_{\infty}$ ізоморфна напівпрямому добутку ${\textsf{Iso}}(\mathbb{Z})\ltimes_\mathfrak{h}\mathscr{P}_{\!\infty}(\mathbb{Z})$ вільної напівґратки з одиницею $(\mathscr{P}_{\!\infty}(\mathbb{Z}),\cup)$ групою ${\textsf{Iso}}(\mathbb{Z})$. Знайдено достатні умови, за виконання яких, трансляційно неперевнна топологія на $\textbf{\textsf{ID}}_{\infty}$ є дискретною, а також побудовано недискретну гаусдорфову напівгрупову топологію на $\textbf{\textsf{ID}}_{\infty}$. Досліджуємо проблему ізоморфного занурення дискретної напівгрупи $\textbf{\textsf{ID}}_{\infty}$ у гаусдорфові топологічні напівгрупи близькі до компактних.

\medskip

\textbf{Oleg Gutik, Anatolii Savchuk,
 On the semigroup $\textbf{ID}_{\infty}$ }

We study the semigroup $\textbf{\textsf{ID}}_{\infty}$ of all partial isometries of the set of integers $\mathbb{Z}$. It is proved that the quotient semigroup $\textbf{\textsf{ID}}_{\infty}/\mathfrak{C}_{\textsf{mg}}$, where $\mathfrak{C}_{\textsf{mg}}$ is the minimum group congruence,  is isomorphic to the group ${\textsf{Iso}}(\mathbb{Z})$ of all isometries of $\mathbb{Z}$, $\textbf{\textsf{ID}}_{\infty}$ is an $F$-inverse semigroup, and $\textbf{\textsf{ID}}_{\infty}$ is isomorphic to the semidirect product ${\textsf{Iso}}(\mathbb{Z})\ltimes_\mathfrak{h}\mathscr{P}_{\!\infty}(\mathbb{Z})$ of the free semilattice with unit $(\mathscr{P}_{\!\infty}(\mathbb{Z}),\cup)$ by the group ${\textsf{Iso}}(\mathbb{Z})$. We give the sufficient conditions on a shift-continuous topology $\tau$ on $\textbf{\textsf{ID}}_{\infty}$ when $\tau$ is discrete. A non-discrete Hausdorff semigroup topology on $\textbf{\textsf{ID}}_{\infty}$ is constructed. Also, the problem of an embedding of the discrete semigroup $\textbf{\textsf{ID}}_{\infty}$ into Hausdorff compact-like topological semigroups is studied.
\end{abstract}

\maketitle


\section{Термінологія та означення}

Ми користуватимемося термінологією з \cite{Clifford-Preston-1961-1967, Engelking-1989, Lawson-1998, Petrich-1984, Ruppert-1984}.

Надалі у тексті потужність множини $A$ позначатимемо через $|A|$, перший не\-скін\-чен\-ний кардинал через $\omega$, і множину цілих чисел --- через $\mathbb{Z}$. Через $\operatorname{cl}_X(A)$ і $\operatorname{int}_X(A)$ позначатимемо \emph{замикання} та \emph{внутрішність} підмножини $A$ в топологічному просторі $X$.

Якщо визначене часткове відображення $\alpha\colon X\rightharpoonup Y$ з множини $X$ у множину $Y$, то через $\operatorname{dom}\alpha$ i $\operatorname{ran}\alpha$ будемо позначати його \emph{область визначення} й \emph{область значень}, відповідно, а через $(x)\alpha$ і $(A)\alpha$ --- образи елемента $x\in\operatorname{dom}\alpha$ та підмножини $A\subseteq\operatorname{dom}\alpha$ при частковому відображенні $\alpha$, відповідно. Часткове відображення $\alpha\colon X\rightharpoonup Y$ називається \emph{ко-скінченним}, якщо множини $X\setminus\operatorname{dom}\alpha$ та $Y\setminus\operatorname{ran}\alpha$ --- скінченні.

Рефлексивне, антисиметричне та транзитивне відношення на множині $X$ називається \emph{частковим порядком} на $X$. Множина $X$ із заданим на ній частковим порядком $\leqslant$ називається \emph{частково впорядкованою множиною} і позначається $(X,\leqslant)$.

Елемент $x$ частково впорядкованої множини $(X,\leqslant)$ називається:
\begin{itemize}
  \item \emph{максимальним} (\emph{мінімальним}) в $(X,\leqslant)$, якщо з відношення $x\leqslant y$ ($y\leqslant x$) в $(X,\leqslant)$ випливає рівність $x=y$;
  \item \emph{найбільшим} (\emph{найменшим}) в $(X,\leqslant)$, якщо $y\leqslant x$ ($x\leqslant y$) для всіх $y\in X$.
\end{itemize}

У випадку, якщо $(X,\leqslant)$~--- частково впорядкована множина і $x\leqslant y$, для деяких $x,y\in X$, то будемо говорити, що елементи $x$ та $y$~--- \emph{порівняльні} в $(X,\leqslant)$. Якщо ж для елементів $x\leqslant y$ не виконується жодне з відношень $x\leqslant y$ або $y\leqslant x$, то говоритимемо, що елементи $x$ та $y$ --- непорівняльні у частково впорядкованій множині $(X,\leqslant)$. Частковий порядок $\leqslant$ на $X$ називається \emph{лінійним}, якщо довільні два елементи в $(X,\leqslant)$ --- порівняльні.

Відображення $h\colon X\rightarrow Y$ з частково впорядкованої множини $(X,\leqslant)$ в частково впорядковану множину $(Y,\leqslant)$ називається \emph{монотонним}, якщо з $x\leqslant y$ випливає $(x)h\leqslant (y)h$.

Якщо $S$~--- напівгрупа, то її підмножина ідемпотентів позначається через $E(S)$.  Напівгрупа $S$ називається \emph{інверсною}, якщо для довільного її елемента $x$ існує єдиний елемент $x^{-1}\in S$ такий, що $xx^{-1}x=x$ та $x^{-1}xx^{-1}=x^{-1}$. В інверсній напівгрупі $S$ вищеозначений елемент $x^{-1}$ називається \emph{інверсним до} $x$. \emph{В'язка}~--- це напівгрупа ідемпотентів, а \emph{напівґратка}~--- це комутативна в'язка. Надалі через $(\mathscr{P}_{\!\infty}(\mathbb{R}),\cup)$ по\-зна\-ча\-ти\-ме\-мо \emph{вільну напівґратку} з одиницею над множиною дійсних чисел, тобто множину усіх скінченних (разом з порожньою) підмножин множини $\mathbb{R}$ з операцією об'єднання.

Відношення еквівалентності $\mathfrak{K}$ на напівгрупі $S$ називається \emph{конгруенцією}, якщо для елементів $a$ i $b$ напівгрупи $S$ з того, що виконується умова $(a,b)\in\mathfrak{K}$ випливає, що $(ca,cb), (ad,bd) \in\mathfrak{K}$, для всіх $c,d\in S$. Відношення $(a,b)\in\mathfrak{K}$ ми також будемо записувати $a\mathfrak{K}b$, і в цьому випадку говоритимемо, що \emph{елементи $a$ i $b$ є $\mathfrak{K}$-еквівалентними}.

Якщо $S$~--- напівгрупа, то на $E(S)$ визначено частковий порядок
\begin{equation*}
    e\leqslant f \quad \hbox{ тоді і лише тоді, коли } \quad ef=fe=e.
\end{equation*}
Так означений частковий порядок на $E(S)$ називається \emph{природним}.

Означимо відношення $\leqslant$ на інверсній напівгрупі $S$ так:
\begin{equation*}
    s\leqslant t \qquad \hbox{тоді і лише тоді, коли}\qquad s=te,
\end{equation*}
для деякого ідемпотента $e\in S$. Так означений частковий порядок називається \emph{при\-род\-ним част\-ковим порядком} на інверсній напівгрупі $S$~\cite{Lawson-1998}. Очевидно, що звуження природного часткового порядку $\leqslant$ на інверсній напівгрупі $S$ на її в'язку $E(S)$ є при\-род\-ним частковим порядком на $E(S)$. Інверсна напівгрупа $S$ називається \emph{факторизовною}, якщо для кожного елемента $s\in S$ існує елемент $g$ групи одиниць напівгрупи $S$ такий, що $s\leqslant g$ стосовно природного часткового порядку $\leqslant$ на $S$.

Відомо, що група ${\textsf{Iso}}(\mathbb{Z})$ ізометрій множини цілих чисел $\mathbb{Z}$ ізоморфна напівпрямому добутку $\mathbb{Z}(+)\rtimes\mathbb{Z}_2$ адитивної групи цілих чисел $\mathbb{Z}(+)$ циклічною групою другого порядку $\mathbb{Z}_2$.

Через $\textbf{\textsf{ID}}_{\infty}$ позначимо напівгрупу всіх часткових коскінченних ізометрій множини цілих чисел $\mathbb{Z}$. Напівгрупа $\textbf{\textsf{ID}}_{\infty}$ означена в праці Безущак \cite{Bezushchak-2004}, де описано її твірні та доведено, що вона має експоненціальний ріст. Зауважимо, що напівгрупа $\textbf{\textsf{ID}}_{\infty}$ є інверсною і, очевидно, є піднапівгрупою напівгрупи всіх часткових коскінченних бієкцій множини цілих чисел $\mathbb{Z}$, а елементи напівгрупи $\textbf{\textsf{ID}}_{\infty}$ --- це саме звуження ізометрій множини цілих чисел $\mathbb{Z}$ на коскінченні підмножини в розумінні Лоусона (див. \cite[с. 9]{Lawson-1998}). У праці \cite{Bezushchak-2008} описані відношення Ґріна та головні ідеали напівгрупи $\textbf{\textsf{ID}}_{\infty}$.

Підмножина $A$  топологічного простору $X$ називається:
\begin{itemize}
  \item \emph{щільною} в $X$, якщо $\operatorname{cl}_X(A)=X$;
  \item \emph{кощільною} в $X$, якщо $X\setminus A$~--- щільна в $X$;
  \item \emph{ніде не щільною} в $X$, якщо $\operatorname{cl}_X(A)$~--- кощільна в $X$;
  \item \emph{$F_\sigma$-множиною}, якщо $A$ є зліченним об'єднанням замкнених множин.
\end{itemize}

\emph{Компактифікацією Стоуна-Чеха} тихоновського простору $X$ називається компактний гаусдорфовий простір $\beta X$, який містить $X$ як щільний підпростір такий, що довільне неперервне відображення $f\colon X\to Y$ в компактний гаусдорфовий простір $Y$ продовжується до неперервного відображення $\overline{f}\colon \beta X\to Y$ \cite{Engelking-1989}.

Топологічний простір $X$ називається:
\begin{itemize}
  \item \emph{квазірегулярним}, якщо для довільної непорожньої відкритої підмножини $U$ в $X$ існує відкрита непорожня підмножина $V\subseteq U$ така, що $\operatorname{cl}_X(V)\subseteq U$;
  \item \emph{компактним}, якщо довільне відкрите покриття простору $X$ містить скінченне підпокриття;
  \item \emph{секвенціально компактним}, якщо довільна послідовність в $X$ містить збіжну підпослідовність;
  \item \emph{зліченно компактним}, якщо довільне зліченне відкрите покриття простору $X$ містить скінченне підпокриття;
  \item \emph{слабко компактним}, якщо довільна локально скінченна сім'я відкритих непорожніх підмножин в $X$ є скінченною \cite{Bagley-Connell-McKnight-Jr-1958};
  \item \emph{$d$-слабко компактним} або \textsf{DFCC}-\emph{простором}, якщо довільна дискретна сім'я відкритих непорожніх підмножин в $X$ є скінченною (див. \cite{Matveev-1998});
  \item \emph{псевдокомпактним}, якщо $X$ є цілком регулярним і кожна неперервна дійснозначна функція на $X$ є обмеженою;
  \item \emph{локально компактним}, якщо для кожної точки $x\in X$ існує відкритий окіл $U(x)$ точки $x$ в $X$ з компактним замиканням $\operatorname{cl}_X(U(x))$;
  \item \emph{повним за Чехом}, якщо $X$ є цілком регулярним і наріст $\beta X\setminus X$ є $F_\sigma$-множиною в $\beta X$;
  \item \emph{берівським}, якщо  для кожної послiдовностi $A_1,A_2,\ldots,A_n,\ldots$ нiде нещiльних множин з $X$ об’єднання $\bigcup_{i=1}^\infty A_i$ є кощiльною пiдмножиною в $X$;
  \item \emph{спадково берівським}, якщо довільна непорожна замкнена підмножина в $X$ є берівським простором.
\end{itemize}
За теоремою~3.10.22 з \cite{Engelking-1989}, цілком регулярний простір $X$ є слабко компактним тоді і лише тоді, коли $X$ є псевдокомпактним. Кожен компакний та кожен секвенціально компактний простір є зліченно компактним, кожен зліченно компактний простір є слабко компактним, а кожен слабко компактний простір є $d$-слабко компактним.

Топологічний простір $X$ із заданою на ньому напівгруповою операцією називається \emph{напівтопологічною} (\emph{топологічною}) \emph{напівгрупою}, якщо ця напівгрупова операція на $X$ є нарізно (сукупно) неперервною. Інверсна топологічна напівгрупа з неперервною інверсією називається \emph{топологічною інверсною напівгрупою}. Топологія $\tau$ на [інверсній] напівгрупі $S$ називається [\emph{інверсною}] \emph{напівгруповою}, якщо $(S,\tau)$ --- топологічна [інверсна] напівгрупа. Також, топологія $\tau$ на  напівгрупі $S$ називається \emph{трансляційно неперервною}, $(S,\tau)$ --- напівтопологічна напівгрупа

Вивчення напівгрупових недискретних тополологізацій  напівгруп починається з класичної праці Ебергарта та Селдена \cite{Eberhart-Selden-1969}, у якій доведено, що кожна напівгрупова гаусдорфова топологія на біциклічному моноїді $\mathscr{C}(p,q)$ є дискретною. У праці \cite{Bertman-West-1976} Бертман і Уест довели, що  кожна гаусдорфова трансляційно неперервна топологія на $\mathscr{C}(p,q)$ є також дискретною. У працях \cite{Fihel-Gutik-2011, Gutik-Maksymyk-2016a??} згадані результати були поширені на розширену біциклічну напівгрупу $\mathscr{C}_{\mathbb{Z}}$ та на інтерасоціативності бі\-цикліч\-ного моноїда. Також Тайманов у \cite{Taimanov-1973} побудував приклад напівгрупи, яка допускає лише дискретну напівгрупову гаусдорфову топологію та в \cite{Taimanov-1975} він визначив достатні ознаки недискретної напівгрупової топологізації комутативної напівгрупи. Напівгрупа $T$ називається {\em Таймановою}, якщо вона містить дві різні точки $0_T,\infty_T$ такі, що $xy=\infty_T$ для довільних різних точок $x,y\in T\setminus\{0_T,\infty_T\}$ і $xy=\infty_T$ у всіх інших випадках \cite{Gutik-2016}. У праці \cite{Gutik-2016} доведено, що довільна Тайманова напівгрупа має такі топологічні властивості: (i) кожна $T_1$-топологія з неперервними зсувами на $T$ є дискретною; (ii) $T$ замкнена в довільній $T_1$-топологічній напівгрупі, що містить $T$ як піднапівгрупу; (iii) кожен неізоморфний гомоморфний образ $Z$ напівгрупи $T$ є напівгрупою з нульовим множенням і, отже, є топологічною напівгрупою в довільній топології на $Z$. Дискретні та недискретні топологізації напівгруп перетворень за модулем берівського чи локально компактного простору вивчали в працях \cite{Bardyla-2016, Bardyla-Gutik-2016, Chuchman-Gutik-2010, Chuchman-Gutik-2011, Gutik-2015, Gutik-Maksymyk-2016??, Gutik-Pozdnyakova-2014, Gutik-Repovs-2011, Gutik-Repovs-2012}.

Проблема ізоморфного занурення напівгруп у гаусдорфові топологічні напівгрупи близькі до компактних досліджували в  \cite{Anderson-Hunter-Koch-1965, Banakh-Dimitrova-Gutik-2009, Banakh-Dimitrova-Gutik-2010, Bardyla-Gutik-2016, Chuchman-Gutik-2010, Chuchman-Gutik-2011, Guran-Gutik-Rav-Chu-2011, Gutik-Lawson-Repov-2009, Gutik-Maksymyk-2016a??, Gutik-Pavlyk-2005, Gutik-Pavlyk-Reiter-2009, Gutik-Repovs-2007}.

Ми доводимо, що фактор-напівгрупа $\textbf{\textsf{ID}}_{\infty}/\mathfrak{C}_{\textsf{mg}}$ за мінімальною груповою конгруенцією $\mathfrak{C}_{\textsf{mg}}$ ізоморфна групі ${\textsf{Iso}}(\mathbb{Z})$ усіх ізометрій множини $\mathbb{Z}$; напівгрупа $\textbf{\textsf{ID}}_{\infty}$ є $F$-інверсною напівгрупою, а також, що напівгрупа $\textbf{\textsf{ID}}_{\infty}$ ізоморфна напівпрямому добутку ${\textsf{Iso}}(\mathbb{Z})\ltimes_\mathfrak{h}\mathscr{P}_{\!\infty}(\mathbb{Z})$ вільної напівґратки з одиницею $(\mathscr{P}_{\!\infty}(\mathbb{Z}),\cup)$ групою ${\textsf{Iso}}(\mathbb{Z})$. Визначено достатні умови, за виконання яких трансляційно неперервна топологія на $\textbf{\textsf{ID}}_{\infty}$  є дискретною, а також побудовано недискретну гаусдорфову напівгрупову топологію на $\textbf{\textsf{ID}}_{\infty}$. Досліджується проб\-ле\-ма ізоморфного занурення дискретної напівгрупи $\textbf{\textsf{ID}}_{\infty}$ у гаусдорфові топологічні напівгрупи близькі до компактних.

\medskip

\section{Структурна теорема для напівгрупи $\textbf{\textsf{ID}}_{\infty}$}

Найменша групова конгруенція $\mathfrak{C}_{\textsf{mg}}$ на інверсній напівгрупі $S$ визначається так (див. \cite[III.5]{Petrich-1984}):
\begin{equation*}
    s\mathfrak{C}_{\textsf{mg}}t \hbox{~в~} S \quad \hbox{~тоді і лише тоді, коли існує ідемпотент~} \quad  e\in S \quad  \hbox{~такий, що~} \quad  es=et.
\end{equation*}

З означення напівгрупи $\textbf{\textsf{ID}}_{\infty}$ випливає, що для довільного елемента $\alpha$ напівгрупи $\textbf{\textsf{ID}}_{\infty}$ існує єдиний елемент $\gamma_\alpha$ групи одиниць $H(1)$ такий, що $\alpha\leqslant\gamma_\alpha$, а отже, означено відображення
\begin{equation}\label{eq-Gamma}
 \mathfrak{G}\colon \textbf{\textsf{ID}}_{\infty}\rightarrow H(1)\colon\alpha\mapsto\gamma_\alpha.
\end{equation}
З означення напівгрупи $\textbf{\textsf{ID}}_{\infty}$ випливає, що так означене відображення $\mathfrak{G}$ є сюр'єк\-тив\-ним гомоморфізмом, і тим більше для $\alpha,\beta\in \textbf{\textsf{ID}}_{\infty}$ маємо, що
\begin{equation*}
    \alpha\mathfrak{C}_{\textsf{mg}}\beta  \quad \hbox{~тоді і лише тоді, коли~} \quad  (\alpha) \mathfrak{G}=(\beta) \mathfrak{G}.
\end{equation*}

Отож, ми довели таку теорему

\begin{theorem}\label{theorem-2.1}
Фактор-напівгрупа $\textbf{\textsf{ID}}_{\infty}/\mathfrak{C}_{\textsf{mg}}$ ізоморфна групі \emph{${\textsf{Iso}}(\mathbb{Z})$} усіх ізометрій множини $\mathbb{Z}$, причому природний гомоморфізм \emph{$\mathfrak{C}_{\textsf{mg}}^{\natural}\colon \textbf{\textsf{ID}}_{\infty}\rightarrow {\textsf{Iso}}(\mathbb{Z})$} визначається за формулою \eqref{eq-Gamma}.
\end{theorem}

Нагадаємо, що інверсна напівгрупа $S$ називається \emph{$F$-інверсною}, якщо $\mathfrak{C}_{\textsf{mg}}$-клас $s_{\mathfrak{C}_{\textsf{mg}}}$ кожного елемента $s$  має найбільший елемент стосовно природного часткового порядку в $S$ \cite{McFadden-Carroll-1971}. Очевидно, що кожна $F$-інверсна напівгрупа містить одиницю.

З означення напівгрупи $\textbf{\textsf{ID}}_{\infty}$ випливає, що довільного елемента $\alpha$ напівгрупи $\textbf{\textsf{ID}}_{\infty}$ існує єдиний елемент $\gamma_\alpha$ групи одиниць $H(1)$ такий, що $\alpha\leqslant\gamma_\alpha$, а отже, виконується

\begin{corollary}\label{corollary-3.1}
$\textbf{\textsf{ID}}_{\infty}$ є $F$-інверсною напівгрупою.
\end{corollary}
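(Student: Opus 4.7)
The corollary should fall out of Theorem~\ref{theorem-2.1} together with the structural observation recalled in the paragraph immediately above it: for every $\alpha\in\textbf{\textsf{ID}}_{\infty}$ there is a \emph{unique} element $\gamma_\alpha\in H(1)$ with $\alpha\leqslant\gamma_\alpha$, where $\leqslant$ is the natural partial order. My plan is to verify the definition of $F$-inverse semigroup by exhibiting the maximum of each $\mathfrak{C}_{\textsf{mg}}$-class explicitly, and I expect no serious obstacle because the substantive structural work has already been done in Section~2.

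First, I will combine the characterization $\alpha\mathfrak{C}_{\textsf{mg}}\beta\iff(\alpha)\mathfrak{G}=(\beta)\mathfrak{G}$ with the definition of the map $\mathfrak{G}$ from~\eqref{eq-Gamma} to describe the $\mathfrak{C}_{\textsf{mg}}$-class of a fixed $\alpha$ as $\{\beta\in\textbf{\textsf{ID}}_{\infty}:\gamma_\beta=\gamma_\alpha\}$. For every such $\beta$ the chain $\beta\leqslant\gamma_\beta=\gamma_\alpha$ shows that $\gamma_\alpha$ is an upper bound of the entire class in $(\textbf{\textsf{ID}}_{\infty},\leqslant)$.

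Second, I will check that $\gamma_\alpha$ itself belongs to its own $\mathfrak{C}_{\textsf{mg}}$-class, i.e.\ that $(\gamma_\alpha)\mathfrak{G}=\gamma_\alpha$; this is the only point requiring a (very short) verification. Since $\gamma_\alpha\in H(1)$ and the reflexive relation $\gamma_\alpha\leqslant\gamma_\alpha$ holds trivially, the uniqueness clause in the definition of $\mathfrak{G}$ forces $(\gamma_\alpha)\mathfrak{G}=\gamma_\alpha$. Combining the two steps, $\gamma_\alpha$ is the greatest element of the $\mathfrak{C}_{\textsf{mg}}$-class of $\alpha$; as this holds for every $\alpha\in\textbf{\textsf{ID}}_{\infty}$, the semigroup is $F$-inverse in the sense of McFadden and Carroll. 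The potential hard point — showing that the maximum is actually attained inside the class rather than merely being a supremum — is handled precisely by the fact that elements of $H(1)$ are fixed by $\mathfrak{G}$, so no new technical ingredient beyond Theorem~\ref{theorem-2.1} is needed.
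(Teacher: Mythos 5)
Your proof is correct and takes essentially the same route as the paper: Corollary~\ref{corollary-3.1} is there deduced directly from the fact that every $\alpha\in\textbf{\textsf{ID}}_{\infty}$ lies below a \emph{unique} $\gamma_\alpha\in H(1)$, combined with the characterization $\alpha\,\mathfrak{C}_{\textsf{mg}}\,\beta$ if and only if $(\alpha)\mathfrak{G}=(\beta)\mathfrak{G}$ recalled before Theorem~\ref{theorem-2.1}. The only detail you spell out that the paper leaves tacit is the (immediate) observation that $(\gamma_\alpha)\mathfrak{G}=\gamma_\alpha$, so that $\gamma_\alpha$ actually lies in the $\mathfrak{C}_{\textsf{mg}}$-class of $\alpha$ and is its maximum, which is exactly what the definition of an $F$-inverse semigroup requires.
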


Для довільного елемента $s$ інверсної напівгрупи $S$ позначимо
\begin{equation*}
{\downarrow}s=\{x\in S\colon x\leqslant s\},
\end{equation*}
 де $\leqslant$~--- природний частковий порядок на $S$.

Нехай $S$~--- довільна $F$-інверсна напівгрупа. Тоді для довільного елемента $s$ на\-пів\-гру\-пи $S$ через $e_s$ позначимо ідемпотент $ss^{-1}\in S$, через $t_s$~--- найбільший елемент стосовно природного часткового порядку на $S$ в $\mathfrak{C}_{\textsf{mg}}$-класі $s_{\mathfrak{C}_{\textsf{mg}}}$ елемента $s$, і нехай $T_S=\{t_s\colon s\in S\}$. Тоді напівгрупа $S$ є диз'юнктним об'єднанням множин ${\downarrow}t$, де $t\in T_S$ \cite{McFadden-Carroll-1971}.

Структура $F$-інверсних напівгруп викладена у  \cite{McFadden-Carroll-1971}, надалі ми далі використаємо такі два твердження для описання напівгрупи $\textbf{\textsf{ID}}_{\infty}$.

\begin{lemma}[{\cite[лема~3]{McFadden-Carroll-1971}}]\label{lemma-3.2}
Нехай $S$~--- $F$-інверсна напівгрупа з одиницею $1_S$. Тоді:
\begin{itemize}
  \item[$(i)$] $1_S$~--- одиниця напівґратки $E(S)$;
  \item[$(ii)$] множина $T_S$ з бінарною операцією
\begin{equation*}
    u\ast v=t_{uv}, \qquad u,v\in T_S,
\end{equation*}
  є групою з нейтральним елементом $1_S$, і $t^{-1}$ є оберненим до елемента $t$ в групі $(T_S,\ast)$;
  \item[$(iii)$] для кожного елемента $t\in T_S$ відображення $\mathfrak{F}_t\colon E(S)\rightarrow {\downarrow}e_t$, означене за формулою
\begin{equation*}
    (f)\mathfrak{F}_t=tft^{-1}, \qquad f\in E(S),
\end{equation*}
   є сюр'єктивним гомоморфізмом, причому $\mathfrak{F}_{1_S}$ є тотожним відображенням на $E(S)$;
   \item[$(iv)$] $(1_S)\mathfrak{F}_t=e_t$ й $(e_t)\mathfrak{F}_{t^{-1}}=e_{t^{-1}}$, для довільного елемента $t\in T_S$;
   \item[$(v)$] для довільних елементів $u,v\in S$ виконується рівність
   \begin{equation*}
   \left((1_S)\mathfrak{F}_u\right)\mathfrak{F}_v\cdot (f)\mathfrak{F}_{u\ast v}=\left((f)\mathfrak{F}_u\right)\mathfrak{F}_v, \qquad  \hbox{для довільного ідемпотента} \quad f\in S;
   \end{equation*}
   \item[$(vi)$] якщо $u,v\in T_S$, то
\begin{equation*}
    f\cdot(g)\mathfrak{F}_u\leqslant e_{u\ast v},
\end{equation*}
   для всіх ідемпотентів $f\leqslant e_u$ та $g\leqslant e_v$ напівгрупи $S$.
\end{itemize}
\end{lemma}

\begin{theorem}[{\cite[теорема~3]{McFadden-Carroll-1971}}]\label{theorem-3.3}
Нехай $S$~--- $F$-інверсна напівгрупа та $\mathscr{S}{=}\displaystyle\!\bigcup_{t\in T_S}\!\left({\downarrow}e_t{\times}\{t\}\right)$. Означимо на $\mathscr{S}$ бінарну операцію $\circ$ так: якщо $u,v\in T_S$, то для ідем\-по\-тен\-тів $f\leqslant e_u$ та $g\leqslant e_v$ приймемо
\begin{equation}\label{eq-circ}
    (f,u)\circ(g,v)=\left(f\cdot(g)\mathfrak{F}_u,u\ast v\right).
\end{equation}
Тоді $\circ$~--- напівгрупова операція на $\mathscr{S}$ і напівгрупа $\left(\mathscr{S},\circ\right)$ ізоморфна напівгрупі $S$ стосовно відображення $\mathfrak{H}\colon S\rightarrow \mathscr{S}\colon s\mapsto\left(ss^{-1},t_s\right)$.
\end{theorem}

Нехай $A$ та $B$~--- напівгрупи, $\operatorname{\textsf{End}}(B)$~--- напівгрупа ендоморфізмів напівгрупи $B$ і визначено гомоморфізм $\mathfrak{h}\colon A\rightarrow \operatorname{\textsf{End}}(B)\colon b\mapsto \mathfrak{h}_b$. Тоді множина $A\times B$ з бінарною операцією
\begin{equation*}
    (a_1,b_1)\cdot(a_2,b_2)=\left(a_1a_2,(b_1)\mathfrak{h}_{a_2}b_2\right), \qquad a_1,a_2\in A, \; b_1,b_2\in B
\end{equation*}
називається \emph{напівпрямим добутком} напівгрупи $A$ напівгрупою $B$ стосовно гомо\-мор\-фіз\-му $\mathfrak{h}$ і позначається $A\ltimes_\mathfrak{h}B$ \cite{Lawson-1998}. У цьому випадку кажуть, що визначена права дія напівгрупи $A$ на напівгрупі $B$ ендоморфізмів (гомоморфізмів). Зауважимо, що напівпрямий добуток інверсних напівгруп не завжди є інверсною напівгрупою (див. \cite[розділ~5.3]{Lawson-1998}).

\begin{lemma}\label{lemma-3.4}
Відображення $\mathfrak{h}\colon H(1)\rightarrow\operatorname{\textsf{End}}\left(E(\textbf{\textsf{ID}}_{\infty})\right) \colon \gamma\mapsto\mathfrak{h}_\gamma$, де $(\alpha)\mathfrak{h}_\gamma=\gamma^{-1}\alpha\gamma$~--- ав\-то\-мор\-фізм напівґратки $E(\textbf{\textsf{ID}}_{\infty})$, є гомоморфізмом, причому $\mathfrak{h}_1$~--- тотожний ав\-то\-мор\-фізм на\-пів\-ґрат\-ки $E(\textbf{\textsf{ID}}_{\infty})$.
\end{lemma}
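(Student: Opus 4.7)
The approach rests on the concrete description of $E(\textbf{\textsf{ID}}_{\infty})$ as the semilattice of partial identity maps $\operatorname{id}_A$ with $A\subseteq\mathbb{Z}$, whose product is $\operatorname{id}_A\cdot\operatorname{id}_B=\operatorname{id}_{A\cap B}$, together with the identification $H(1)\cong{\textsf{Iso}}(\mathbb{Z})$ which says that every $\gamma\in H(1)$ is a total bijection of $\mathbb{Z}$ (in particular $\operatorname{dom}\gamma=\operatorname{ran}\gamma=\mathbb{Z}$).

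First I would check that each $\mathfrak{h}_\gamma$ sends $E(\textbf{\textsf{ID}}_{\infty})$ into itself. Using the right-action convention $(x)(\gamma^{-1}\alpha\gamma)=\bigl(((x)\gamma^{-1})\alpha\bigr)\gamma$, a direct computation shows that for $\alpha=\operatorname{id}_A$ the composite $\gamma^{-1}\alpha\gamma$ is defined exactly when $(x)\gamma^{-1}\in A$, i.e.\ when $x\in(A)\gamma$, and then it equals $x$; hence $(\alpha)\mathfrak{h}_\gamma=\operatorname{id}_{(A)\gamma}\in E(\textbf{\textsf{ID}}_{\infty})$. Next, since $\gamma$ is a bijection one has $(A\cap B)\gamma=(A)\gamma\cap (B)\gamma$, from which
\[
(\alpha\beta)\mathfrak{h}_\gamma=\operatorname{id}_{(A\cap B)\gamma}=\operatorname{id}_{(A)\gamma}\cdot\operatorname{id}_{(B)\gamma}=(\alpha)\mathfrak{h}_\gamma\cdot(\beta)\mathfrak{h}_\gamma
\]
for all $\alpha=\operatorname{id}_A$, $\beta=\operatorname{id}_B$ in $E(\textbf{\textsf{ID}}_{\infty})$. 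Thus $\mathfrak{h}_\gamma\in\operatorname{\textsf{End}}\bigl(E(\textbf{\textsf{ID}}_{\infty})\bigr)$.

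The homomorphism property $\mathfrak{h}_{\gamma_1\gamma_2}=\mathfrak{h}_{\gamma_1}\mathfrak{h}_{\gamma_2}$ then follows immediately from associativity of multiplication in $\textbf{\textsf{ID}}_{\infty}$:
\[
(\alpha)\mathfrak{h}_{\gamma_1\gamma_2}=(\gamma_1\gamma_2)^{-1}\alpha(\gamma_1\gamma_2)=\gamma_2^{-1}\bigl(\gamma_1^{-1}\alpha\gamma_1\bigr)\gamma_2=\bigl((\alpha)\mathfrak{h}_{\gamma_1}\bigr)\mathfrak{h}_{\gamma_2}.
\]
Substituting the identity of $H(1)$ gives $(\alpha)\mathfrak{h}_1=\alpha$, so $\mathfrak{h}_1$ is the identity endomorphism of $E(\textbf{\textsf{ID}}_{\infty})$; combined with $\mathfrak{h}_\gamma\mathfrak{h}_{\gamma^{-1}}=\mathfrak{h}_{\gamma^{-1}}\mathfrak{h}_\gamma=\mathfrak{h}_1$ this also shows, as a by-product, that every $\mathfrak{h}_\gamma$ is actually an automorphism of $E(\textbf{\textsf{ID}}_{\infty})$.

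I do not anticipate any serious obstacle: the only point requiring care is the bookkeeping of the right-action notation $(x)\alpha$ and of the order of composition when unwinding $\gamma^{-1}\alpha\gamma$, after which both the endomorphism property of each $\mathfrak{h}_\gamma$ and the functoriality of $\gamma\mapsto\mathfrak{h}_\gamma$ reduce to one-line verifications using the fact that $\gamma$ is a total bijection.
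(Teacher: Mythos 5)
Your proof is correct and covers everything the lemma asserts, but it proceeds differently from the paper. The paper's proof never invokes the concrete description of the idempotents: it works purely equationally inside the inverse monoid, proving the endomorphism property by inserting $\gamma\gamma^{-1}=1$ between the two idempotents ($\gamma^{-1}\varepsilon\iota\gamma=\gamma^{-1}\varepsilon\gamma\gamma^{-1}\iota\gamma$), surjectivity by exhibiting the explicit preimage $\gamma\varepsilon\gamma^{-1}$, injectivity by conjugating back ($\varepsilon=\gamma\gamma^{-1}\varepsilon\gamma\gamma^{-1}$, using that $\gamma$ lies in the unit group $H(1)$), and the homomorphism property of $\gamma\mapsto\mathfrak{h}_\gamma$ exactly as you do. You instead identify $E(\textbf{\textsf{ID}}_{\infty})$ with partial identities and compute $(\operatorname{id}_A)\mathfrak{h}_\gamma=\operatorname{id}_{(A)\gamma}$, deriving the endomorphism property from $(A\cap B)\gamma=(A)\gamma\cap(B)\gamma$, and you obtain bijectivity as a formal consequence of $\mathfrak{h}_\gamma\mathfrak{h}_{\gamma^{-1}}=\mathfrak{h}_{\gamma^{-1}}\mathfrak{h}_\gamma=\mathfrak{h}_1$ rather than by separate injectivity and surjectivity checks --- a slicker functorial argument that would in fact also work abstractly. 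The trade-off: the paper's argument is coordinate-free and valid verbatim for conjugation by units in any inverse monoid, while yours yields the explicit formula for the action of $H(1)\cong{\textsf{Iso}}(\mathbb{Z})$ on domains, which is precisely what underlies the identification $E(\textbf{\textsf{ID}}_{\infty})\cong\mathscr{P}_{\!\infty}(\mathbb{Z})$ used later in Theorem~\ref{theorem-3.5} and Corollary~\ref{corollary-3.6}. One small correction to your setup: the idempotents of $\textbf{\textsf{ID}}_{\infty}$ are the maps $\operatorname{id}_A$ for \emph{cofinite} $A\subseteq\mathbb{Z}$ (the elements of $\textbf{\textsf{ID}}_{\infty}$ are cofinite partial isometries), so you should add the one-line observation that $(A)\gamma$ is again cofinite because $\gamma$ is a bijection of $\mathbb{Z}$; this is needed for $\mathfrak{h}_\gamma$ to be well defined as a self-map of $E(\textbf{\textsf{ID}}_{\infty})$, and it is harmlessly true in your argument.
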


\begin{proof}
Для довільних $\gamma\in H(1)$, $\varepsilon,\iota\in E(\textbf{\textsf{ID}}_{\infty})$ отримаємо, що
\begin{equation*}
    (\varepsilon\iota)\mathfrak{h}_\gamma=\gamma^{-1}\varepsilon\iota\gamma=\gamma^{-1}\varepsilon\gamma\gamma^{-1}\iota\gamma= (\varepsilon)\mathfrak{h}_\gamma(\iota)\mathfrak{h}_\gamma,
\end{equation*}
а отже, $\mathfrak{h}_\gamma$~--- гомоморфізм напівґратки $E(\textbf{\textsf{ID}}_{\infty})$. Оскільки для довільних $\gamma\in H(1)$ та $\varepsilon\in E(\textbf{\textsf{ID}}_{\infty})$ елемент $\gamma\varepsilon\gamma^{-1}$ є ідемпотентом напівгрупи $\textbf{\textsf{ID}}_{\infty}$ і
\begin{equation*}
(\gamma\varepsilon\gamma^{-1})\mathfrak{h}_\gamma=\gamma^{-1}(\gamma\varepsilon\gamma^{-1})\gamma=\varepsilon,
\end{equation*}
то гомоморфізм $\mathfrak{h}_\gamma$ --- сюр'єктивне відображення. Очевидно, що $\mathfrak{h}_1$~--- тотожне відображення напівґратки $E(\textbf{\textsf{ID}}_{\infty})$.

Припустимо, що $(\varepsilon)\mathfrak{h}_\gamma=(\iota)\mathfrak{h}_\gamma$, для деяких $\gamma\in H(1)$, $\varepsilon,\iota\in E(\textbf{\textsf{ID}}_{\infty})$. Оскільки $H(1)$~--- група одиниць напівгрупи $\textbf{\textsf{ID}}_{\infty}$, то з рівностей
\begin{equation*}
    \gamma^{-1}\varepsilon\gamma=(\varepsilon)\mathfrak{h}_\gamma=(\iota)\mathfrak{h}_\gamma=\gamma^{-1}\iota\gamma
\end{equation*}
випливає, що
\begin{equation*}
    \varepsilon=1\varepsilon 1=\gamma\gamma^{-1}\varepsilon\gamma\gamma^{-1}=\gamma\gamma^{-1}\iota\gamma\gamma^{-1}=1\iota 1=\iota,
\end{equation*}
а отже, $\mathfrak{h}_\gamma$~--- ав\-то\-мор\-фіз\-м напівґратки $E(\textbf{\textsf{ID}}_{\infty})$.

Зафіксуємо довільні $\gamma,\delta\in H(1)$. Тоді для довільного ідемпотента $\varepsilon\in \textbf{\textsf{ID}}_{\infty}$ маємо, що
\begin{equation*}
    (\varepsilon)\mathfrak{h}_{\gamma\delta}= (\gamma\delta)^{-1}\varepsilon\gamma\delta= \delta^{-1}\gamma^{-1}\varepsilon\gamma\delta= \delta^{-1}(\varepsilon)\mathfrak{h}_{\gamma}\delta= \left((\varepsilon)\mathfrak{h}_{\gamma}\right)\mathfrak{h}_\delta= (\varepsilon)\left(\mathfrak{h}_{\gamma}\cdot\mathfrak{h}_\delta\right),
\end{equation*}
а отже, так означене відображення $\mathfrak{h}\colon H(1)\rightarrow\operatorname{\textsf{End}}\left(E(\textbf{\textsf{ID}}_{\infty})\right)$ є гомоморфізмом.
\end{proof}

Наступна теорема описує структуру напівгрупи $\textbf{\textsf{ID}}_{\infty}$.

\begin{theorem}\label{theorem-3.5}
Напівгрупа $\textbf{\textsf{ID}}_{\infty}$ ізоморфна напівпрямому добутку \emph{${\textsf{Iso}}(\mathbb{Z})\ltimes_\mathfrak{h}\mathscr{P}_{\!\infty}(\mathbb{Z})$} вільної напівґратки з одиницею $(\mathscr{P}_{\!\infty}(\mathbb{Z}),\cup)$ групою \emph{${\textsf{Iso}}(\mathbb{Z})$} усіх ізометрій множини цілих чисел $\mathbb{Z}$.
\end{theorem}

\begin{proof}
Оскільки група одиниць $H(1)$ напівгрупи $\textbf{\textsf{ID}}_{\infty}$ ізоморфна групі ${\textsf{Iso}}(\mathbb{Z})$ усіх ізометрій множини $\mathbb{Z}$, то нам достатньо довести, що напівгрупа $\textbf{\textsf{ID}}_{\infty}$ ізо\-морф\-на напівпрямому добутку $H(1)\ltimes_\mathfrak{h}E(\textbf{\textsf{ID}}_{\infty})$ на\-пів\-ґрат\-ки $E(\textbf{\textsf{ID}}_{\infty})$ групою одиниць $H(1)$ напівгрупи $\textbf{\textsf{ID}}_{\infty}$ сто\-сов\-но гомоморфізму $\mathfrak{h}\colon H(1)\rightarrow\operatorname{\textsf{End}}\left(E(\textbf{\textsf{ID}}_{\infty})\right) \colon \gamma\mapsto\mathfrak{h}_\gamma$, де $(\alpha)\mathfrak{h}_\gamma=\gamma^{-1}\alpha\gamma$.

Означимо відображення $\mathfrak{T}\colon \textbf{\textsf{ID}}_{\infty}\rightarrow H(1)\ltimes_\mathfrak{h}E(\textbf{\textsf{ID}}_{\infty})$ за формулою
\begin{equation*}
    (\alpha)\mathfrak{T}=\left(\gamma_\alpha,\alpha^{-1}\alpha\right),
\end{equation*}
де елемент $\gamma_\alpha$ групи одиниць $H(1)$ напівгрупи $\textbf{\textsf{ID}}_{\infty}$, визначений формулою \eqref{eq-Gamma}. Оскільки для довільного елемента $\alpha$ напівгрупи $\textbf{\textsf{ID}}_{\infty}$ існує єдиний елемент $\gamma_\alpha$ групи одиниць $H(1)$ такий, що $\alpha\leqslant\gamma_\alpha$, то з наслідку~\ref{corollary-3.1} випливає, що відображення $\mathfrak{T}\colon \textbf{\textsf{ID}}_{\infty}\rightarrow H(1)\ltimes_\mathfrak{h}E(\textbf{\textsf{ID}}_{\infty})$ означене коректно, і воно є сюр'єктивним. При\-пус\-ти\-мо, що існують еле\-мен\-ти $\alpha$ та $\beta$ напівгрупи $\textbf{\textsf{ID}}_{\infty}$ такі, що $(\alpha)\mathfrak{T}=(\beta)\mathfrak{T}$. Тоді $\left(\gamma_\alpha,\alpha^{-1}\alpha\right)=\left(\gamma_\beta,\beta^{-1}\beta\right)$ і, використавши властивість, що для довільного елемента $\alpha$ напівгрупи $\textbf{\textsf{ID}}_{\infty}$ існує єдиний елемент $\gamma_\alpha$ групи одиниць $H(1)$ такий, що $\alpha\leqslant\gamma_\alpha$, i лему~1.4.6 з \cite{Lawson-1998}, отримуємо
\begin{equation*}
    \alpha=\gamma_\alpha\alpha^{-1}\alpha=\gamma_\beta\beta^{-1}\beta=\beta,
\end{equation*}
а отже, відображення $\mathfrak{T}\colon \textbf{\textsf{ID}}_{\infty}\rightarrow H(1)\ltimes_\mathfrak{h}E(\textbf{\textsf{ID}}_{\infty})$ є сюр'єктивним.

Нехай $\alpha$ та $\beta$ --- довільні елементи напівгрупи $\textbf{\textsf{ID}}_{\infty}$. Тоді з формули \eqref{eq-Gamma} і  тео\-реми~\ref{theorem-2.1} випливає, що $\alpha\beta\mathfrak{C}_{\textsf{mg}}\gamma_\alpha\gamma_\beta$, і оскільки $\gamma_\alpha,\gamma_\beta\in H(1)$, то отримуємо, що $\gamma_\alpha\gamma_\beta=\gamma_{\alpha\beta}$. Звідси вип\-ли\-ває, що
\begin{equation*}
\begin{split}
  (\alpha)\mathfrak{T}(\beta)\mathfrak{T} & = \left(\gamma_\alpha,\alpha^{-1}\alpha\right)\left(\gamma_\beta,\beta^{-1}\beta\right)= \\
    & =\big(\gamma_\alpha\gamma_\beta,\gamma_\beta^{-1}\alpha^{-1}\alpha\gamma_\beta\beta^{-1}\beta\big)=\\
    & = \big(\gamma_{\alpha\beta},\gamma_\beta^{-1}\alpha^{-1}\alpha\gamma_\beta\beta^{-1}\beta\big).
\end{split}
\end{equation*}
За лемою~\ref{lemma-3.4} відображення $\mathfrak{h}_\gamma\colon E(\textbf{\textsf{ID}}_{\infty})\rightarrow E(\textbf{\textsf{ID}}_{\infty})\colon\alpha\mapsto\gamma^{-1}\alpha\gamma$ є ав\-то\-мор\-фіз\-мом напівґратки $E(\textbf{\textsf{ID}}_{\infty})$, а отже отримуємо, що елемент $\gamma_\beta^{-1}\alpha^{-1}\alpha\gamma_\beta$ є ідем\-по\-тен\-том напівгрупи $\textbf{\textsf{ID}}_{\infty}$. Оскільки $\textbf{\textsf{ID}}_{\infty}$~--- інверсна напівгрупа, то для довільного елемента $\alpha$ напівгрупи $\textbf{\textsf{ID}}_{\infty}$ існує єдиний елемент $\gamma_\alpha$ групи одиниць $H(1)$ такий, що $\alpha\leqslant\gamma_\alpha$. З леми~1.4.6 з \cite{Lawson-1998} виливає, що  $\beta=\gamma_\beta\beta^{-1}\beta$, а отже,
\begin{equation*}
\begin{split}
  \gamma_\beta^{-1}\alpha^{-1}\alpha\gamma_\beta\beta^{-1}\beta & = \left(\gamma_\beta^{-1}\alpha^{-1}\alpha\gamma_\beta\right)\left(\beta^{-1}\beta\right)\left(\beta^{-1}\beta\right)=\\
    & = \left(\beta^{-1}\beta\gamma_\beta^{-1}\right)\left(\alpha^{-1}\alpha\right)\left(\gamma_\beta\beta^{-1}\beta\right)=\\
    & = \left(\gamma_\beta\beta^{-1}\beta\right)^{-1}\left(\alpha^{-1}\alpha\right)\left(\gamma_\beta\beta^{-1}\beta\right)=\\
    & = \beta^{-1}\left(\alpha^{-1}\alpha\right)\beta=\\
    & = \left(\beta^{-1}\alpha^{-1}\right)\left(\alpha\beta\right)=\\
    & = \left(\alpha\beta\right)^{-1}\left(\alpha\beta\right).
\end{split}
\end{equation*}
Отож, отримуємо
\begin{equation*}
    (\alpha\beta)\mathfrak{T}=\left(\gamma_{\alpha\beta},(\alpha\beta)^{-1}\alpha\beta\right)=(\alpha)\mathfrak{T}(\beta)\mathfrak{T},
\end{equation*}
а отже, відображення $\mathfrak{T}\colon \textbf{\textsf{ID}}_{\infty}\rightarrow H(1)\ltimes_\mathfrak{h}E(\textbf{\textsf{ID}}_{\infty})$ є гомоморфізмом, що і за\-вер\-шує доведення теореми.
\end{proof}

Позаяк група ${\textsf{Iso}}(\mathbb{Z})$ ізоморфна напівпрямому добутку $\mathbb{Z}(+)\rtimes\mathbb{Z}_2$, то з теореми~\ref{theorem-3.5} випливає наслідок.

\begin{corollary}\label{corollary-3.6}
Напівгрупа $\textbf{\textsf{ID}}_{\infty}$ ізоморфна напівпрямому добутку
\begin{equation*}
\left(\mathbb{Z}(+)\rtimes\mathbb{Z}_2\right)\ltimes_\mathfrak{h}\mathscr{P}_{\!\infty}(\mathbb{Z}).
\end{equation*}
\end{corollary}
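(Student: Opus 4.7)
The plan is to combine Theorem~\ref{theorem-3.5} with the description of $\textsf{Iso}(\mathbb{Z})$ as the semidirect product $\mathbb{Z}(+)\rtimes\mathbb{Z}_2$ that was recalled in Section~1 just before the statement of Theorem~\ref{theorem-2.1}. Since Theorem~\ref{theorem-3.5} already supplies an isomorphism
\[
\mathfrak{T}\colon \textbf{\textsf{ID}}_{\infty}\longrightarrow \textsf{Iso}(\mathbb{Z})\ltimes_\mathfrak{h}\mathscr{P}_{\!\infty}(\mathbb{Z}),
\]
all that remains is to transport the semidirect product structure through a fixed group isomorphism $\Phi\colon \mathbb{Z}(+)\rtimes\mathbb{Z}_2\to \textsf{Iso}(\mathbb{Z})$.

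I would begin by introducing the pulled-back action $\mathfrak{h}'\colon \mathbb{Z}(+)\rtimes\mathbb{Z}_2\to \operatorname{\textsf{End}}\bigl(\mathscr{P}_{\!\infty}(\mathbb{Z})\bigr)$ defined by $\mathfrak{h}'_g=\mathfrak{h}_{\Phi(g)}$. Because $\Phi$ is a group isomorphism and $\mathfrak{h}$ is a homomorphism into the endomorphism monoid by Lemma~\ref{lemma-3.4}, the composition $\mathfrak{h}'$ is again a homomorphism. A direct check then shows that
\[
\bigl(\mathbb{Z}(+)\rtimes\mathbb{Z}_2\bigr)\ltimes_{\mathfrak{h}'}\mathscr{P}_{\!\infty}(\mathbb{Z})\longrightarrow \textsf{Iso}(\mathbb{Z})\ltimes_\mathfrak{h}\mathscr{P}_{\!\infty}(\mathbb{Z}),\qquad (g,A)\mapsto(\Phi(g),A),
\]
is a semigroup isomorphism (multiplicativity reduces to the identity $\mathfrak{h}_{\Phi(g_1g_2)}=\mathfrak{h}_{\Phi(g_1)\Phi(g_2)}$ and associativity of the underlying semidirect product). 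Composing the inverse of this map with $\mathfrak{T}$ produces the required isomorphism.

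I do not anticipate any real obstacle here: the corollary is essentially a cosmetic restatement of Theorem~\ref{theorem-3.5} using the known structural description of $\textsf{Iso}(\mathbb{Z})$. The only point worth flagging, and the one I would make explicit in the write-up, is that the symbol $\mathfrak{h}$ appearing on the right-hand side of the statement of the corollary is implicitly the transported action $\mathfrak{h}'$ rather than the action defined in Lemma~\ref{lemma-3.4}; spelling this identification out keeps the notation of the corollary consistent with the rest of the paper.
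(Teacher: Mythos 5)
Your proposal is correct and takes essentially the same route as the paper: the paper obtains Corollary~\ref{corollary-3.6} in a single line by combining Theorem~\ref{theorem-3.5} with the fact, recalled in the introduction, that the group ${\textsf{Iso}}(\mathbb{Z})$ is isomorphic to the semidirect product $\mathbb{Z}(+)\rtimes\mathbb{Z}_2$. You merely spell out the transport of the action $\mathfrak{h}$ along a fixed group isomorphism $\Phi\colon \mathbb{Z}(+)\rtimes\mathbb{Z}_2\rightarrow{\textsf{Iso}}(\mathbb{Z})$ (including the mild abuse of notation in reusing the symbol $\mathfrak{h}$), a routine verification the paper leaves implicit.
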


Надалі нам буде потрібне таке твердження.

\begin{proposition}\label{proposition-3.7}
Для довільного $\alpha\in \textbf{\textsf{ID}}_{\infty}$ множина
\begin{equation*}
{\uparrow} \alpha=\left\{\beta\in \textbf{\textsf{ID}}_{\infty}\colon \alpha\leqslant\beta\right\}
\end{equation*}
скінченна.
\end{proposition}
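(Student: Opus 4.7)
The plan is to identify ${\uparrow}\alpha$ with the family of restrictions of the isometry $\gamma_\alpha$ to co-finite subsets of $\mathbb{Z}$ containing $\operatorname{dom}\alpha$, and then observe that there are only finitely many such subsets.

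First I will use the fact that on the inverse semigroup $\textbf{\textsf{ID}}_{\infty}$ of partial bijections the natural partial order $\leqslant$ coincides with the restriction order of partial maps: $\alpha\leqslant\beta$ amounts to $\operatorname{dom}\alpha\subseteq\operatorname{dom}\beta$ together with $\beta|_{\operatorname{dom}\alpha}=\alpha$.

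The key step will be to show that $\gamma_\beta=\gamma_\alpha$ for every $\beta\in{\uparrow}\alpha$. From $\alpha\leqslant\beta\leqslant\gamma_\beta$ one gets that $\gamma_\beta\in H(1)$ extends $\alpha$, so by the uniqueness of the smallest such unit (recalled just before Corollary~\ref{corollary-3.1}) we obtain $\gamma_\alpha\leqslant\gamma_\beta$. The natural partial order is trivial on the group $H(1)$: if $g\leqslant h$ with $g,h\in H(1)$, write $g=he$ for some $e\in E(\textbf{\textsf{ID}}_{\infty})$; then $e=h^{-1}g$ is a unit idempotent, hence $e=1$ and $g=h$. Thus $\gamma_\alpha=\gamma_\beta$.

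It follows that every $\beta\in{\uparrow}\alpha$ has the form $\gamma_\alpha|_{D}$ for some $D$ with $\operatorname{dom}\alpha\subseteq D\subseteq\mathbb{Z}$; such a $D$ is automatically co-finite because $\mathbb{Z}\setminus D\subseteq\mathbb{Z}\setminus\operatorname{dom}\alpha$ is finite, so $\gamma_\alpha|_D$ does lie in $\textbf{\textsf{ID}}_{\infty}$, and conversely each such restriction extends $\alpha$. Therefore ${\uparrow}\alpha$ is in bijection with the power set of $\mathbb{Z}\setminus\operatorname{dom}\alpha$, giving $|{\uparrow}\alpha|=2^{|\mathbb{Z}\setminus\operatorname{dom}\alpha|}<\infty$. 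I do not expect a serious obstacle; the only delicate point is the triviality of $\leqslant$ on $H(1)$, which is a standard inverse-semigroup fact.
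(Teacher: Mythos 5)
Your proof is correct and takes essentially the same approach as the paper: the paper's proof uses Lawson's Lemma~1.4.6 to rewrite ${\uparrow}\alpha$ as $\left\{\beta\in \textbf{\textsf{ID}}_{\infty}\colon \alpha=\alpha\alpha^{-1}\beta\right\}$, i.e.\ as the set of extensions of $\alpha$, and then appeals to the cofiniteness of domains in $\textbf{\textsf{ID}}_{\infty}$ --- precisely the identification you make explicit through the unique isometry $\gamma_\alpha$ with $\alpha\leqslant\gamma_\alpha$. You merely spell out the rigidity step the paper leaves as ``easy'' (that every extension of $\alpha$ is a restriction $\gamma_\alpha|_D$ with $\operatorname{dom}\alpha\subseteq D$), which as a bonus yields the exact value $|{\uparrow}\alpha|=2^{|\mathbb{Z}\setminus\operatorname{dom}\alpha|}$.
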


\begin{proof}
З леми~1.4.6 \cite{Lawson-1998} випливає, що
\begin{equation*}
{\uparrow} \alpha=\left\{\beta\in \textbf{\textsf{ID}}_{\infty}\colon \alpha=\alpha\alpha^{-1}\beta\right\},
\end{equation*}
і, використавши те, що у напівгрупі $\textbf{\textsf{ID}}_{\infty}$ усі ідемпотенти є частковими тотожними відображеннями коскінченних у $\mathbb{Z}$ підмножин, отримуємо, що підмножина ${\uparrow} \alpha$ є скінченною в $\textbf{\textsf{ID}}_{\infty}$.
\end{proof}

\begin{proposition}\label{proposition-3.8}
Для довільних $\alpha,\beta\in \textbf{\textsf{ID}}_{\infty}$ множини
\begin{equation*}
  R(\alpha|\beta)=\left\{\chi\in \textbf{\textsf{ID}}_{\infty}\colon \alpha\chi=\beta\right\} \qquad \hbox{i} \qquad L(\alpha|\beta)=\left\{\chi\in \textbf{\textsf{ID}}_{\infty}\colon \chi\alpha=\beta\right\}
\end{equation*}
скінченні. Причому, якщо $R(\alpha|\beta)\neq\varnothing$   {$(L(\alpha|\beta)\neq\varnothing)$}, то $\left|R(\alpha|\beta)\cap H(1)\right|=1$   $(\left|L(\alpha|\beta)\cap H(1)\right|=1)$.
\end{proposition}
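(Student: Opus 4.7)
The plan is to treat both halves of Proposition~\ref{proposition-3.8} through a single structural observation: the equation $\alpha\chi=\beta$ completely determines $\chi$ on a cofinite subset of $\mathbb{Z}$, leaving only finite bookkeeping elsewhere. I describe the argument for $R(\alpha|\beta)$; the analogue for $L(\alpha|\beta)$ will follow by a mirror-image argument using $\chi\alpha=\beta$.

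First I would unpack what $\alpha\chi=\beta$ demands. For $x\in\operatorname{dom}\beta$ one must have $x\in\operatorname{dom}\alpha$, $(x)\alpha\in\operatorname{dom}\chi$, and $\bigl((x)\alpha\bigr)\chi=(x)\beta$. Setting $A:=(\operatorname{dom}\beta)\alpha$, a cofinite subset of $\mathbb{Z}$, one sees that $\chi|_A$ is prescribed by the rule $y\mapsto(y\alpha^{-1})\beta$, and the identity $\operatorname{dom}(\alpha\chi)=\operatorname{dom}\beta$ forces $\operatorname{dom}\chi\cap\operatorname{ran}\alpha=A$ exactly. Thus $\chi$ is free only on the finite set $\mathbb{Z}\setminus\operatorname{ran}\alpha$.

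For finiteness: once $\chi|_A$ is fixed and $|A|\geqslant 2$, any two values from $A$ pin down the unique full isometry of $\mathbb{Z}$ (translation $x\mapsto x+c$ or reflection $x\mapsto c-x$) extending $\chi|_A$, so every $z\in\mathbb{Z}\setminus\operatorname{ran}\alpha$ admits at most one image in any partial isometric extension. Hence $\chi$ is parameterised by a subset of $\mathbb{Z}\setminus\operatorname{ran}\alpha$, giving the bound $|R(\alpha|\beta)|\leqslant 2^{|\mathbb{Z}\setminus\operatorname{ran}\alpha|}<\infty$.

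For the claim $|R(\alpha|\beta)\cap H(1)|=1$, I would invoke the homomorphism $\mathfrak{G}$ of \eqref{eq-Gamma}---a homomorphism by Theorem~\ref{theorem-2.1}---applied to $\alpha\chi=\beta$, which yields $\gamma_\alpha\gamma_\chi=\gamma_\beta$ in $H(1)$. So every $\chi\in R(\alpha|\beta)$ has the common shadow $\gamma:=\gamma_\alpha^{-1}\gamma_\beta\in H(1)$, and an element of $R(\alpha|\beta)\cap H(1)$ must equal its own shadow, i.e.\ must equal $\gamma$; this delivers the uniqueness half. For the existence half I would verify $\alpha\gamma=\beta$ directly, by writing $\alpha=\gamma_\alpha(\alpha^{-1}\alpha)$ and $\beta=\gamma_\beta(\beta^{-1}\beta)$ via \cite[Lemma~1.4.6]{Lawson-1998}, expanding $\alpha\gamma$ in the semidirect-product coordinates of Theorem~\ref{theorem-3.5}, and using Lemma~\ref{lemma-3.4} to pass the idempotent $\alpha^{-1}\alpha$ through $\gamma$.

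The principal obstacle is this last verification: confirming that the idempotent coordinate of $\alpha\gamma$ coincides with $\beta^{-1}\beta$ in the semilattice $\mathscr{P}_{\!\infty}(\mathbb{Z})$. I expect this to follow from the witness $\chi\in R(\alpha|\beta)$ supplied by the hypothesis: the identity $\chi=\gamma_\chi(\chi^{-1}\chi)$ combined with $\alpha\chi=\beta$ produces precisely the idempotent equality required, after which Lemma~\ref{lemma-3.4} absorbs the conjugation. Once this is settled, the dual argument with sides reversed handles $L(\alpha|\beta)\cap H(1)$.
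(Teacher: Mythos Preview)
Your finiteness argument is correct but takes a more concrete route than the paper's. The paper simply observes that $\alpha\chi=\beta$ implies $\alpha^{-1}\alpha\chi=\alpha^{-1}\beta$, whence $\alpha^{-1}\beta\leqslant\chi$ in the natural partial order (because $e\chi\leqslant\chi$ for every idempotent $e$); thus $R(\alpha|\beta)\subseteq{\uparrow}(\alpha^{-1}\beta)$, and Proposition~\ref{proposition-3.7} finishes in one line. Your direct combinatorics yields the explicit bound $2^{|\mathbb{Z}\setminus\operatorname{ran}\alpha|}$, but it essentially re-derives what Proposition~\ref{proposition-3.7} already packages.

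For the second assertion your uniqueness argument via the homomorphism $\mathfrak{G}$ is fine, and it matches the paper's one-sentence appeal to the $F$-inverse property: the containment just displayed forces $R(\alpha|\beta)\cap H(1)\subseteq{\uparrow}(\alpha^{-1}\beta)\cap H(1)=\{\gamma_{\alpha^{-1}\beta}\}$. The existence half, however, is not merely an obstacle---it is false in general. Take $\alpha=1$ and let $\beta$ be any idempotent strictly below $1$ (for instance the identity on $\mathbb{Z}\setminus\{0\}$); then $R(\alpha|\beta)=\{\beta\}\neq\varnothing$, yet $R(\alpha|\beta)\cap H(1)=\varnothing$. In your notation $\gamma=\gamma_\alpha^{-1}\gamma_\beta=1$ and $\alpha\gamma=1\neq\beta$, so the idempotent-coordinate check you flag as the ``principal obstacle'' cannot be completed. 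The paper's own proof does not attempt this step; its $F$-inverse remark delivers only the inequality $|R(\alpha|\beta)\cap H(1)|\leqslant 1$, and that is what the later applications actually use.
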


\begin{proof}
Зауважимо, очевидно, що $R(\alpha|\beta)$ є підмножиною множини
\begin{equation*}
  R=\left\{\chi\in \textbf{\textsf{ID}}_{\infty}\colon \alpha^{-1}\alpha\chi=\alpha^{-1}\beta\right\},
\end{equation*}
оскільки $\alpha^{-1}\alpha$ є ідемпотентом напівгрупи $\textbf{\textsf{ID}}_{\infty}$, то отримуємо, що $R\subseteq {\uparrow}\alpha^{-1}\beta$. Тоді за твердженням~\ref{proposition-3.7} отримуємо, що $R$~--- скінченна, а отже, $R(\alpha|\beta)$ --- скінченна підмножина в $\textbf{\textsf{ID}}_{\infty}$. Останнє твердження випливає з того, що $\textbf{\textsf{ID}}_{\infty}$ є $F$-напівгрупою. Доведення твердження у випадку множини $L(\alpha|\beta)$ є аналогічним.
\end{proof}


\medskip

\section{Про (напів)топологічну напівгрупу $\textbf{\textsf{ID}}_{\infty}$}

\begin{proposition}\label{proposition-4.1}
Нехай $\tau$ --- $T_1$-топологія на напівгрупі $\textbf{\textsf{ID}}_{\infty}$ стосовно якої ліві (праві) зсуви в $\left(\textbf{\textsf{ID}}_{\infty},\tau\right)$ є неперервними відображеннями та топологічний простір $\left(\textbf{\textsf{ID}}_{\infty},\tau\right)$ містить ізольовану точку. Тоді група одиниць $H(1)$ є дискретним підпростором в $\left(\textbf{\textsf{ID}}_{\infty},\tau\right)$.
\end{proposition}

\begin{proof}
Припустимо, що ліві зсуви в $\left(\textbf{\textsf{ID}}_{\infty},\tau\right)$ є неперервними відображеннями. Нехай $\alpha_0$~--- ізольована точка в $\left(\textbf{\textsf{ID}}_{\infty},\tau\right)$. Тоді з неперервності лівих зсувів у $\left(\textbf{\textsf{ID}}_{\infty},\tau\right)$ випливає, що множина ${\uparrow} \alpha_0$ --- відкрито-замкнена, як повний прообраз відкрито-замкненої множини при неперервному лівому зсуві на ідемпотент $\alpha_0\alpha_0^{-1}$. Позаяк $\tau$ --- $T_1$-топологія на напівгрупі $\textbf{\textsf{ID}}_{\infty}$, то за твердженням~\ref{proposition-3.8} група одиниць $H(1)$ напівгрупи $\textbf{\textsf{ID}}_{\infty}$ містить ізольовану точку в $\left(\textbf{\textsf{ID}}_{\infty},\tau\right)$. Але в кожній групі рівняння $ax=b$ має єдиний розв'язок і множина $\textbf{\textsf{ID}}_{\infty}\setminus H(1)$ є двобічним ідеалом в напівгрупі $\textbf{\textsf{ID}}_{\infty}$, то з неперервності лівих зсувів у $\left(\textbf{\textsf{ID}}_{\infty},\tau\right)$ випливає, що кожен елемент групи одиниць $H(1)$ є ізольованою точкою в просторі $\left(\textbf{\textsf{ID}}_{\infty},\tau\right)$.

Для правих зсувів доведення аналогічне.
\end{proof}

\begin{theorem}\label{theorem-4.2-1}
Нехай $\tau$ --- берівська $T_1$-топологія на напівгрупі $\textbf{\textsf{ID}}_{\infty}$, стосовно якої ліві (праві) зсуви в $\left(\textbf{\textsf{ID}}_{\infty},\tau\right)$ є неперервними відображеннями. Тоді група одиниць $H(1)$ є дискретним підпростором в $\left(\textbf{\textsf{ID}}_{\infty},\tau\right)$.
\end{theorem}

\begin{proof}
Припустимо, що ліві зсуви в $\left(\textbf{\textsf{ID}}_{\infty},\tau\right)$ є неперервними відображеннями. Позаяк напівгрупа $\textbf{\textsf{ID}}_{\infty}$ зліченна, то з беровості $T_1$-простору $\left(\textbf{\textsf{ID}}_{\infty},\tau\right)$ випливає, що хоча б один елемент сім'ї $\left\{\left\{\alpha\right\}\colon \alpha\in \textbf{\textsf{ID}}_{\infty}\right\}$ має непорожню внутрішність в $\left(\textbf{\textsf{ID}}_{\infty},\tau\right)$, а отже, простір $\left(\textbf{\textsf{ID}}_{\infty},\tau\right)$ містить ізольовану точку. Далі скористаємося  тверджен\-ням~\ref{proposition-4.1}.
\end{proof}

Кажуть, що топологія $\tau$ на напівгрупі $S$ є \emph{ліво} (\emph{право}) $E$-\emph{берівською}, якщо для довільного ідемпотента $e\in S$ підпростір $eS$ ($Se$) в $S$ є берівським.

\begin{theorem}\label{theorem-4.2}
Кожна  ліво (право) $E$-берівська $T_1$-топологія $\tau$ на напівгрупі $\textbf{\textsf{ID}}_{\infty}$, стосовно якої праві (ліві) зсуви в $\left(\textbf{\textsf{ID}}_{\infty},\tau\right)$ є неперервними відображеннями, дискретна.
\end{theorem}

\begin{proof}
Нехай $\tau$ --- ліво $E$-берівська $T_1$-топологія на напівгрупі $\textbf{\textsf{ID}}_{\infty}$, стосовно якої ліві (праві) зсуви в $\left(\textbf{\textsf{ID}}_{\infty},\tau\right)$ є неперервними відображеннями. Оскільки $\textbf{\textsf{ID}}_{\infty}$ --- інверсна напівгрупа, то $\alpha \textbf{\textsf{ID}}_{\infty}=\alpha\alpha^{-1} \textbf{\textsf{ID}}_{\infty}$ для довільного елемента $\alpha\in \textbf{\textsf{ID}}_{\infty}$, а отже, $\alpha \textbf{\textsf{ID}}_{\infty}$~--- берівський підпростір в $\left(\textbf{\textsf{ID}}_{\infty},\tau\right)$. Тоді з беровості $T_1$-простору $\alpha \textbf{\textsf{ID}}_{\infty}$ випливає, що хоча б один елемент сім'ї $\left\{\left\{\beta\right\}\colon \beta\in \alpha \textbf{\textsf{ID}}_{\infty}\right\}$ має непорожню внутрішність в $\alpha \textbf{\textsf{ID}}_{\infty}$, а отже, простір $\alpha \textbf{\textsf{ID}}_{\infty}$ містить ізольовану точку.  Нехай $\alpha_0$~--- ізольована точ\-ка в $\alpha \textbf{\textsf{ID}}_{\infty}$ і $\alpha\beta=\alpha_0$ для деякого $\beta\in \textbf{\textsf{ID}}_{\infty}$. Тоді з неперервності правих зсувів у $\left(\textbf{\textsf{ID}}_{\infty},\tau\right)$ та твердження~\ref{proposition-3.8} випливає, що множина
\begin{equation*}
 \left\{\chi\in \textbf{\textsf{ID}}_{\infty}\colon \chi\beta=\alpha_0\right\}
\end{equation*}
є скінченною та відкритою в $\left(\textbf{\textsf{ID}}_{\infty},\tau\right)$ як повний прообраз відкритої множини при неперервному правому зсуві на елемент $\beta$, і, крім того, вона містить елемент $\alpha$. Звідки випливає, що $\alpha$ --- ізольована точка в $\left(\textbf{\textsf{ID}}_{\infty},\tau\right)$. З довільності вибору елемента $\alpha\in \textbf{\textsf{ID}}_{\infty}$ випливає, що усі точки простору $\left(\textbf{\textsf{ID}}_{\infty},\tau\right)$  ізольовані.
\end{proof}

Позаяк в гаусдорфовій напівтопологічній напівгрупі $S$ множини $eS$ i $Se$ --- замк\-не\-ні для довільного ідемпотента $e\in S$, то з теореми~\ref{theorem-4.2} випливає такий наслідок

\begin{corollary}\label{corollary-4.3}
Кожна гаусдорфова трансляційно неперервна спадково берівська топологія $\tau$ на напівгрупі $\textbf{\textsf{ID}}_{\infty}$   є дискретною.
\end{corollary}

Оскільки кожен гаусдорфовий локально компактний топологічний простір є повним за Чехом, а кожен повний за Чехом є спадково берівським (див. \cite{Engelking-1989}), то з наслідку~\ref{corollary-4.3} випливає наслідок~\ref{corollary-4.4}.

\begin{corollary}\label{corollary-4.4}
Кожна гаусдорфова трансляційно неперервна повна за Чехом (а отже, і локально компактна) топологія $\tau$ на напівгрупі $\textbf{\textsf{ID}}_{\infty}$  є дискретною.
\end{corollary}

З наступного прикладу випливає, що на напівгрупі $\textbf{\textsf{ID}}_{\infty}$ існує недискретна неберівська гаусдорфова топологія $\tau_\textsf{NB}$ така, що $\left(\textbf{\textsf{ID}}_{\infty},\tau_\textsf{NB}\right)$ є топологічною напівгрупою.

\begin{example}\label{example-4.5}
Відомо, що група ізометрій ${\textsf{Iso}}(\mathbb{T}^1)$ одиничного кола
\begin{equation*}
 \mathbb{T}^1=\left\{z\in\mathbb{C}\colon |z|=1\right\}
\end{equation*}
на комплексній площині ізоморфна напівпрямому добутку $\mathbb{T}^1\rtimes\mathbb{Z}_2$ та відображення
\begin{equation*}
\theta\colon \mathbb{Z}(+)\rtimes\mathbb{Z}_2\rightarrow \mathbb{T}^1\rtimes\mathbb{Z}_2\colon (z,a)\mapsto(e^{iz},a)
\end{equation*}
є ізоморфним (алгебричним) зануренням групи ${\textsf{Iso}}(\mathbb{Z})$ в групу ${\textsf{Iso}}(\mathbb{T}^1)$. Нехай на одиничному колі $\mathbb{T}^1$ задано компактну топологію, індуковану з $\mathbb{C}$, де на $\mathbb{C}$ визначена звичайна евклідова топологія, а на групі $\mathbb{Z}_2$ визначена дискретна топологія. Тоді $\mathbb{T}^1\rtimes\mathbb{Z}_2$ з топологією добутку є компактною топологічною групою (див. \cite[приклад 6.22]{Roelcke-Dierolf-1982}), яка індукує на підгрупі ${\textsf{Iso}}(\mathbb{Z})$ недискретну групову топологію.

Нехай на напівґратці $\mathscr{P}_{\!\infty}(\mathbb{Z})$ задана дискретна топологія. Тоді за теоремою~2.10 з \cite[том~1, с.~67]{Carruth-Hildebrant-Koch-1983-1986} напівгрупа $\left(\mathbb{Z}(+)\rtimes\mathbb{Z}_2\right)\ltimes_\mathfrak{h}\mathscr{P}_{\!\infty}(\mathbb{Z})$ з топологією добутку є топологічною напівгрупою, яка, очевидно, не є дискретним простором.
\end{example}

\begin{lemma}\label{lemma-4.6-0}
Якщо $A$~--- дискретний щільний підпростір $T_1$-топологічного простору $X$, то $A$~--- відкритий підпростір в $X$.
\end{lemma}

\begin{proof}
Припустимо протилежне: існує точка $x\in A$ така, що кожен її відкритий окіл $U(x)$ перетинає множину $X\setminus A$. Зафіксуємо довільний відкритий окіл $U_0(x)$ точ\-ки $x$ в топологічному просторі $X$ такий, що $U_0(x)\cap A=\{x\}$. Тоді $U_0(x)$ є відкритим околом деякої точки $y\in U_0(x)\cap X\setminus A$ в топологічному просторі $X$, а отже, $U_0(x)$ містить нескінченну кількість точок множини $A$, що суперечить вибору околу $U_0(x)$. З отриманої суперечності випливає твердження леми.
\end{proof}

\begin{theorem}\label{theorem-4.6}
Нехай дискретна напівгрупа $\textbf{\textsf{ID}}_{\infty}$ є щільною піднапівгрупою $T_1$-напівтопологічної напівгрупи  $S$ й $I=S\setminus \textbf{\textsf{ID}}_{\infty}\neq\varnothing$. Тоді $I$ є двобічним ідеалом в~$S$.
\end{theorem}

\begin{proof}
З леми~\ref{lemma-4.6-0} випливає, що $\textbf{\textsf{ID}}_{\infty}$ є відкритим підпростором в $S$.

Зафіксуємо довільний елемент $y\in I$, якщо $x\cdot y=z\notin I$ для деякого елемента $x\in \textbf{\textsf{ID}}_{\infty}$, то існує відкритий окіл $U(y)$ точки $y$ в топологічному просторі $S$ такий, що  $\{x\}\cdot U(y)=\{z\}\subset \textbf{\textsf{ID}}_{\infty}$. Окіл $U(y)$ містить нескінченну кількість елементів напівгрупи  $\textbf{\textsf{ID}}_{\infty}$, що суперечить твердженню~\ref{proposition-3.8}. З отриманого протиріччя випливає, що $x\cdot y\in I$ для всіх $x\in \textbf{\textsf{ID}}_{\infty}$ and $y\in I$. Доведення твердження, що $y\cdot x\in I$ для всіх $x\in \textbf{\textsf{ID}}_{\infty}$ та $y\in I$ є аналогічним.

Припустимо протилежне: $x\cdot y=w\notin I$, для деяких $x,y\in I$. Тоді $w\in \textbf{\textsf{ID}}_{\infty}$ і з нарізної неперервності напівгрупової операції в $S$ випливає, що існують відкриті околи $U(x)$ та $U(y)$ точок $x$ та $y$ в просторі $S$, відповідно, такі, що $\{x\}\cdot U(y)=\{w\}$ and $U(x)\cdot \{y\}=\{w\}$. Однак обидва околи $U(x)$ і $U(y)$ містять нескінченну кількість елементів напівгрупи $\textbf{\textsf{ID}}_{\infty}$, а отже, обидві рівності $\{x\}\cdot U(y)=\{w\}$ й $U(x)\cdot \{y\}=\{w\}$ суперечать попередній частині доведення теореми, оскільки $\{x\}\cdot \left(U(y)\cap \textbf{\textsf{ID}}_{\infty}\right)\subseteq I$. З отриманого протиріччя випливає, що $x\cdot y\in I$.
\end{proof}

\begin{proposition}\label{proposition-4.7}
Нехай гаусдорфова топологічна напівгрупа $S$ містить дискретну напівгрупу $\textbf{\textsf{ID}}_{\infty}$ як щільну піднапівгрупу. Тоді для довільного $c\in \textbf{\textsf{ID}}_{\infty}$ множина
\begin{equation*}
    D_c=\left\{(x,y)\in \textbf{\textsf{ID}}_{\infty}\times \textbf{\textsf{ID}}_{\infty}\colon x y=c\right\}
\end{equation*}
вікрито-замкнена в $S\times S$.
\end{proposition}

\begin{proof}
З леми~\ref{lemma-4.6-0} випливає, що $\textbf{\textsf{ID}}_{\infty}$ є відкритим підпростором в $S$. Тоді з неперервності напівгрупової операції в напівгрупі $S$ отримуємо, що $D_c$ --- відкрита підмножина в просторі $S\times S$ для довільного елемента $c\in \textbf{\textsf{ID}}_{\infty}$.

Припустимо, що існує такий елемент $c\in \textbf{\textsf{ID}}_{\infty}$, що $D_c$ є незамкненою підмножиною в $S\times S$. Тоді існує точка накопичення $(a,b)\in S\times S$ множини  $D_c$. З неперервності напівгрупової операції в $S$ випливає, що $a\cdot b=c$. Але  $\textbf{\textsf{ID}}_{\infty}\times \textbf{\textsf{ID}}_{\infty}$ є дискретним підпростором в  $S\times S$, а отже, за теоремою~\ref{theorem-4.6} точки $a$ і $b$ належать до двобічного ідеалу $I=S\setminus \textbf{\textsf{ID}}_{\infty}$, а звідси випливає, що добуток $a\cdot b\in S\setminus \textbf{\textsf{ID}}_{\infty}$ не може дорівнювати елементові $c$.
\end{proof}

\begin{theorem}\label{theorem-4.8}
Якщо гаусдорфова топологічна напівгрупа $S$ містить напівгрупу $\textbf{\textsf{ID}}_{\infty}$ як щільну дискретну піднапівгрупу, то квадрат $S\times S$ не є слабко компактним простором.
\end{theorem}

\begin{proof}
З твердженням~\ref{proposition-4.7} для довільного елемента $c\in \textbf{\textsf{ID}}_{\infty}$ квадрат $S\times S$ містить відкрито-замкнений дискретний підпростір $D_c$. У випадку, коли $c$ є одиницею групи одиниць напівгрупи $\textbf{\textsf{ID}}_{\infty}$, то множина $D_c$ містить нескінченну підмножину $\left\{\left(x,x^{-1}\right)\colon x\in H(1)\right\}$, а отже, множина $D_c$ є нескінченною. Звідси випливає, що простір $S\times S$ не є слабко компактним.
\end{proof}

З іншого боку, кожен зліченно компактний простір є слабко компактним і за теоремою~3.10.4 з \cite{Engelking-1989} замкнений підпростір зліченно компактного простору є знову зліченно компактним, то з теореми~\ref{theorem-4.8} випливає наслідок~\ref{corollary-4.9}.

\begin{corollary}\label{corollary-4.9}
Якщо гаусдорфова топологічна напівгрупа $S$ містить дискретну напівгрупу $\textbf{\textsf{ID}}_{\infty}$, то її квадрат $S\times S$ не є зліченно компактним простором.
\end{corollary}

Відомо, що компактність і секвенціальна компактність зберігається скінченними добутками (див. \cite[розділ 3]{Engelking-1989}), а отже, з наслідку~\ref{corollary-4.9} випливають такі два наслідки

\begin{corollary}\label{corollary-4.10}
Дискретна напівгрупа $\textbf{\textsf{ID}}_{\infty}$ не занурюється топологічно ізоморфно в жодну гаусдорфову компактну топологічну напівгрупу.
\end{corollary}

\begin{corollary}\label{corollary-4.11}
Дискретна напівгрупа $\textbf{\textsf{ID}}_{\infty}$ не занурюється топологічно ізоморфно в жодну гаусдорфову секвенціально компактну топологічну напівгрупу.
\end{corollary}

\begin{theorem}\label{theorem-4.13}
Якщо гаусдорфова топологічна напівгрупа $S$ містить напівгрупу $\textbf{\textsf{ID}}_{\infty}$ з ізольованою точкою в $\textbf{\textsf{ID}}_{\infty}$, то квадрат $S\times S$ не є зліченно компактним простором.
\end{theorem}

\begin{proof}
Якщо напівгрупа $\textbf{\textsf{ID}}_{\infty}$ містить ізольовану точку, то за твердженням~\ref{proposition-4.1} група одиниць $H(1)$ є дискретним підпростором в $\textbf{\textsf{ID}}_{\infty}$. Тоді за твердженням~2.3.3 з \cite{Engelking-1989} отримаємо, що $\operatorname{cl}_{S\times S}(H(1)\times H(1))=\operatorname{cl}_{S}(H(1))\times \operatorname{cl}_{S}(H(1))$, а тоді з теореми~3.10.4 з \cite{Engelking-1989} випливає, що $\operatorname{cl}_{S\times S}(H(1)\times H(1))$~--- зліченно компактний простір. Однак $\operatorname{cl}_{S\times S}(H(1)\times H(1))$~--- замкнена піднапівгрупа в $S\times S$ (див. \cite[т.~1, с.~9--10]{Carruth-Hildebrant-Koch-1983-1986}), то з аналогічних міркувань (як і в доведенні твердження~\ref{proposition-4.7} і теореми~\ref{theorem-4.8}) отримуємо, що $\operatorname{cl}_{S\times S}(H(1)\times H(1))$ не є слабко компактним підпростором в $S\times S$, а отже, за теоремою~3.10.4 з \cite{Engelking-1989} простір $S\times S$ не є зліченно компактним.
\end{proof}

\begin{theorem}\label{theorem-4.15}
Слабко компактна квазі-регулярна $T_1$-топологічна інверсна напівгрупа не містить напівгрупу $\textbf{\textsf{ID}}_{\infty}$ як щільну піднапівгрупу.
\end{theorem}

\begin{proof}
Припустимо протилежне: існує слабко компактна квазі-регулярна $T_1$-топологічна інверсна напівгрупа $S$, яка містить напівгрупу $\textbf{\textsf{ID}}_{\infty}$ як щільну піднапівгрупу. Тоді з теореми~2.8 монографії \cite{Haworth-McCoy-1977} випливає, що простір напівгрупи $S$ є берівським, а отже, хоча б один елемент сім'ї $\mathscr{U}=\left\{s\colon s\in \textbf{\textsf{ID}}_{\infty}\right\}\cup\left\{S\setminus \textbf{\textsf{ID}}_{\infty}\right\}$ має непорожню внутрішність. Позаяк усі точки множини $S\setminus \textbf{\textsf{ID}}_{\infty}$ є точками дотику до множини $\textbf{\textsf{ID}}_{\infty}$ у просторі $S$, то $\operatorname{int}_S\left(S\setminus \textbf{\textsf{ID}}_{\infty}\right)=\varnothing$, а отже, напівгрупа $\textbf{\textsf{ID}}_{\infty}$ містить ізольовану точку в просторі $S$. Тоді за твердженням~\ref{proposition-4.1} група одиниць $H(1)$ напівгрупи $\textbf{\textsf{ID}}_{\infty}$ є дискретним підпростором у напівгрупі $\textbf{\textsf{ID}}_{\infty}$, а отже, і у просторі $S$. Припустимо, що група одиниць $H(1)$ напівгрупи $\textbf{\textsf{ID}}_{\infty}$ не є замкненим підпростором у топологічній інверсній напівгрупі $S$. Зафіксуємо довільний елемент $s\in S$ такий, що $s\in \operatorname{cl}_S\left(H(1)\right)\setminus H(1)$. Доведемо, що $ss^{-1}\neq 1\neq s^{-1}s$. Нехай $U(1)$~--- довільний відкритий окіл одиниці $1$ напівгрупи $\textbf{\textsf{ID}}_{\infty}$ у топологічній інверсній напівгрупі $S$ такий, що $U(1)\cap \textbf{\textsf{ID}}_{\infty}=\{1\}$. Однак $S$~--- топологічна інверсна напівгрупа, а отже,  існує відкритий окіл $V(s)$ елемента $s$ у просторі $S$ такий, що
\begin{equation*}
V(s)\cdot \left(V(s)\right)^{-1}\cup \left(V(s)\right)^{-1}\cdot V(s)\subseteq U(1).
\end{equation*}
Але окіл $V(s)$ елемента $s$ містить нескінченну кількість елементів групи одиниць $H(1)$ напівгрупи $\textbf{\textsf{ID}}_{\infty}$, то
\begin{equation*}
\left(V(s)\cdot \left(V(s)\right)^{-1}\cup \left(V(s)\right)^{-1}\cdot V(s)\right)\cap \textbf{\textsf{ID}}_{\infty}\neq \{1\},
\end{equation*}
а це суперечить вибору околу $U(1)$. З отриманої суперечності випливає, що група одиниць $H(1)$ напівгрупи $\textbf{\textsf{ID}}_{\infty}$ є замкненим підпростором у топологічній інверсній напівгрупі $S$, і більше того, група одиниць $H(1)$ напівгрупи $\textbf{\textsf{ID}}_{\infty}$ є групою одиниць напівгрупи $S$.

Далі зауважимо, що група одиниць $H(1)$ напівгрупи $\textbf{\textsf{ID}}_{\infty}$ є відкритим під\-прос\-то\-ром в просторі~$S$. Справді, з  наведених міркувань випливає, що напівгрупа $S$ містить ізольовану точку $s_0\in \textbf{\textsf{ID}}_{\infty}$ в просторі $S$. Позаяк $S$~--- топологічна ін\-верс\-на напівгрупа, то повний прообраз $(\{s_0\})\rho_{s_0}^{-1}$ стосовно правого зсуву $\rho_{s_0}\colon S\rightarrow S: s\mapsto s\cdot s_0$ є відкритою підмножиною в $S$. З означення напівгрупи $\textbf{\textsf{ID}}_{\infty}$ випливає, що  $(\{s_0\})\rho_{s_0}^{-1}\cap H(1)=\{1\}$. Доведемо, шо множина $(\{s_0\})\rho_{s_0}^{-1}$ скінченна. Припустимо протилежне. Нехай  $(\{s_0\})\rho_{s_0}^{-1}$ --- нескінченна множина в $S$. Тоді з тверджен\-ня~\ref{proposition-3.8} випливає, що відкрита підмножина $(\{s_0\})\rho_{s_0}^{-1}$ містить нескінченну кількість елементів з наросту $S\setminus \textbf{\textsf{ID}}_{\infty}$, а отже, вона є відкритим околом деякої точки $x\in S\setminus \textbf{\textsf{ID}}_{\infty}$. Але довільний відкритий окіл точки $x\in S\setminus \textbf{\textsf{ID}}_{\infty}$ міс\-тить нескінченну кількість елементів напівгрупи $\textbf{\textsf{ID}}_{\infty}$, а це суперечить твердженню~\ref{proposition-3.8}, оскільки $\left((\{s_0\})\rho_{s_0}^{-1}\right)\rho_{s_0}=\left\{s_0\right\}$. Отож, ми отримали, що множина $(\{s_0\})\rho_{s_0}^{-1}$ відкрита та скінченна, і, крім того, вона містить одиницю $1$ групи одиниць $H(1)$. Отже, одиниця $1$ групи одиниць $H(1)$ напівгрупи $\textbf{\textsf{ID}}_{\infty}$ є ізольованою точкою в просторі $S$. Зафіксуємо довільний елемент $x_0$ групи одиниць $H(1)$. Позаяк $S$~--- топологічна інверсна напівгрупа, то повний прообраз $(\{1\})\rho_{x_0^{-1}}^{-1}$ стосовно правого зсуву $\rho_{x_0^{-1}}\colon S\rightarrow S: s\mapsto s\cdot x_0^{-1}$ є відкритою підмножиною в $S$. Тоді з означення напівгрупи $\textbf{\textsf{ID}}_{\infty}$ випливає, що $(\{1\})\rho_{x_0^{-1}}^{-1}\cap \textbf{\textsf{ID}}_{\infty}=\left\{x_0\right\}$. Використовуючи твердження~\ref{proposition-3.8}, отримуємо, що $(\{1\})\rho_{x_0^{-1}}^{-1}\cap S=\left\{x_0\right\}$. Отож, група одиниць $H(1)$ напівгрупи $\textbf{\textsf{ID}}_{\infty}$ є відкрито-замкненим дискретним підпростором у топологічній інверсній напівгрупі $S$, а це суперечить слабкій ком\-пакт\-нос\-ті прос\-то\-ру~$S$. З отриманого протиріччя випливає твердження теореми.
\end{proof}

\begin{theorem}\label{theorem-4.16}
Напівгрупа $\textbf{\textsf{ID}}_{\infty}$ не занурюється в жодну зліченно компактну $T_3$-топологічну інверсну напівгрупу.
\end{theorem}

\begin{proof}
Припустимо, що існує зліченно компактна $T_3$-топологічна інверсна напівгрупа $S$, яка містить напівгрупу $\textbf{\textsf{ID}}_{\infty}$. Тоді з теорем~2.1.6 і 3.10.4 з \cite{Engelking-1989} випливає, що замикання $\operatorname{cl}_S\left(\textbf{\textsf{ID}}_{\infty}\right)$ є зліченно компактним $T_3$-простором, а з твердження~II.2 з \cite{Eberhart-Selden-1969}, що $\operatorname{cl}_S\left(\textbf{\textsf{ID}}_{\infty}\right)$~--- топологічна інверсна напівгрупа. Отже, $T_3$-топологічна інверсна напівгрупа $\operatorname{cl}_S\left(\textbf{\textsf{ID}}_{\infty}\right)$ містить щільну напівгрупу, а це суперечить теоремі~\ref{theorem-4.15}. З отриманого протиріччя випливає твердження теореми.
\end{proof}

\section*{Подяка}

Автори висловлюють подяку С. Бардилі, О. Равському та рецензенту за корисні коментарі та зауваження.


\end{document}